\documentclass[12pt]{article}      
\usepackage{graphicx}

\usepackage{graphicx}
\usepackage{url}
\usepackage{color}
\definecolor{maroon}{rgb}{.69,.188,.376}
\definecolor{darkgreen}{rgb}{0,.5,0}
\definecolor{darkblue}{rgb}{0,0,.5}
\definecolor{magenta}{rgb}{1,0,1}

\usepackage[mathscr]{euscript}		

\usepackage{dsfont}

\usepackage{psfrag}			
\usepackage[colorlinks=true]{hyperref}
\hypersetup{pdftex, colorlinks=true, linkcolor=maroon, citecolor=maroon,
  filecolor=blue,urlcolor=blue}
\usepackage{amsmath}
\usepackage{amsthm}

\usepackage{amssymb}

\usepackage{caption} 

\usepackage{anysize}

\usepackage{enumerate} 
\usepackage{enumitem}

\usepackage[top=.95in, bottom = 1 in, left=1in, right = .6in]{geometry}



\newtheorem{theorem}{Theorem}
\newtheorem{lemma}{Lemma}[section]

\newtheorem{proposition}{Proposition}[section]
\newtheorem{remark}{Remark}[section]

\newtheorem{example}{Example}[section]

\numberwithin{equation}{section}

\setlength{\headheight}{0cm}
\setlength{\headsep}{0cm}
\addtolength{\textheight}{0.5cm}
\addtolength{\textwidth}{-.5cm}
\addtolength{\oddsidemargin}{-.4cm}

\definecolor{Red}{rgb}{1,0,0}
\definecolor{Blue}{rgb}{0,0,1}
\definecolor{Olive}{rgb}{0.41,0.55,0.13}
\definecolor{Yarok}{rgb}{0,0.5,0}
\definecolor{Green}{rgb}{0,1,0}
\definecolor{MGreen}{rgb}{0,0.8,0}
\definecolor{DGreen}{rgb}{0,0.55,0}
\definecolor{Yellow}{rgb}{1,1,0}
\definecolor{Cyan}{rgb}{0,1,1}
\definecolor{Magenta}{rgb}{1,0,1}
\definecolor{Orange}{rgb}{1,.5,0}
\definecolor{Violet}{rgb}{.5,0,.5}
\definecolor{Purple}{rgb}{.75,0,.25}
\definecolor{Brown}{rgb}{.75,.5,.25}
\definecolor{Grey}{rgb}{.7,.7,.7}
\definecolor{Black}{rgb}{0,0,0}

\newcommand{\ignore}[1]{{}}

\def\st{, \,}

\def\g{\gamma}

\def\hcap{{\rm hcap}}

\date{\today}

\begin{document}

\title{Smoothing of Boundary Behaviour in Stochastic Planar Evolutions}

\author{Atul Shekhar%
  \thanks{Lindstedtsv\"agen 25, Royal Institute of Technology,
         Stockholm, Sweden.
    Email: \url{atuls@kth.se}}
}

\maketitle
\begin{abstract}

Motivated by the study of trace for Schramm-Loewner evolutions, we consider evolutions of planar domains governed by ordinary differential equations with holomorphic vector fields $F$ defined on the upper half plane $\mathbb{H}$. We show a smoothing effect of the presence of noise on the boundary behaviour of associated conformal maps. More precisely, if $F$ is H\"older, we show that evolving domains vary continuously in uniform topology and their boundaries are continuously differentiable Jordan arcs. This is in contrast with examples from deterministic setting where the corner points on the boundary of domain $F(\mathbb{H})$ may give rise to corner points on the boundaries of corresponding evolving domains.

\end{abstract}

\noindent {\em Keywords:} Regularization by noise, Stochastic evolutions, trace of Loewner chains. \\

\noindent {\em AMS 2010 Subject Classification:} 60H10, 30C35.

\section{Introduction and Results.}

In this article we consider evolutions of planar domains governed by ordinary differential equations (ODEs) with holomorphic vector fields. Let $\mathbb{H}:= \{z \in \mathbb{C}| Im(z) > 0\}$ be the upper half plane and $F: \mathbb{H} \to \mathbb{H}$ be a holomorphic function. Let $U:[0,\infty)\to \mathbb{R}$ be a continuous real valued function with $U_0=0$. One can construct a family of simply connected planar domains by letting $\mathbb{H}$ evolve under the flow of the equation  
\begin{equation}\label{model-eqn}
dZ_t = F(Z_t)dt + dU_t, \hspace{2mm} Z_s = z \in \mathbb{H}.
\end{equation}

For $(s,t)\in \Delta := \{(s,t)| 0\leq s \leq t < \infty\}$, let $\varphi(s,t,z)$ denote the value of solution $Z$ to equation \eqref{model-eqn} at time $t$ solved with the initial condition $Z_s = z\in \mathbb{H}$. Note that since $F$ is holomorphic on $\mathbb{H}$, viewing \eqref{model-eqn} as a time dependent ODE, solution $Z$ is uniquely well defined till the time the solution escapes $\mathbb{H}$. Also, since $Im(F(z)) >0$ for $z \in \mathbb{H}$ and $U$ is real valued, $Im(Z_t)$ is increasing in time $t$ and thus $Z_t$ stays in $\mathbb{H}$. Assuming that the solution $Z$ doesn't blow to infinity in finite time, $\varphi(s,t,z)$ is well defined for all $(s,t)\in \Delta$ and $z\in \mathbb{H}$. We will call $\varphi: \Delta \times \mathbb{H} \to \mathbb{H}$ as the flow associated to equation \eqref{model-eqn}. Further note that from the uniqueness of solution to ODEs, it follows easily that for each fixed $(s,t)\in \Delta$, $\varphi(s,t,.)$ is an injective holomorphic map defined on $\mathbb{H}$. The planar evolution associated to $(F,U)$ is then defined to be the family $\mathcal{H}= \{H_{s,t}\}_{(s,t)\in \Delta}$ of simply connected domains given by $H_{s,t}:= \varphi(s,t, \mathbb{H})$. Note that $H_{s,t}\subset H_{u,t}\subset \mathbb{H}$ for all $s\leq u\leq t$. It however doesn't satisfy any inclusion property in the $t$ variable. We are interested in understanding qualitative properties of domains in $\mathcal{H}$. Following questions arising naturally in this context:

\begin{enumerate}
\item Are the boundaries of domains in family $\mathcal{H}$ locally connected, i.e. can it be parameterized by a continuous function? If yes, are these boundaries also Jordan arcs?

\item How regular are the boundaries of domains in $\mathcal{H}$? 

\item Do boundaries of domains in $\mathcal{H}$ vary continuously in uniform topology as $s,t$ vary? 

\end{enumerate}

Motivation for considering above questions come from the study of trace in the theory of Loewner chains. When $F$ is given by the inversion of the upper half plane, i.e. $F(z) = \frac{-2}{z}$, equation \eqref{model-eqn} is known as reverse time Loewner differential equation driven by driving function $U$, also see interesting related works \cite{general-slit-chains},\cite{nam-gyu-kang} which considers more general vector fields $F$. Since our main result doesn't apply to Loewner chains and is related to it only on a heuristic level, we postpone an introduction to the theory of Loewner chains and its relation to \eqref{model-eqn} till section \ref{conclusion}. The main goal of this paper is to exhibit a clear distinction in the properties of planar evolution $\mathcal{H}$ when $U$ is a deterministic and regular driver versus when $U$ is a random and irregular driver. We will show that random and irregular drivers have some inbuilt smoothing effects which can not be expected when $U$ is a generic deterministic regular driver. We thus argue that the fundamental reason behind the existence of trace for deterministic and random Loewner chains are in fact different, see section \ref{trace-problem}.\\

We will restrict ourselves to H\"older continuous functions $F$. Recall that for $\alpha \in (0,1]$, $F$ is locally $\alpha$-H\"older if for all bounded subsets $A\subset \mathbb{H}$, 
\[ ||F||_{\alpha, A}:=\sup_{z,w \in A} \frac{|F(z)- F(w)|}{|z-w|^{\alpha}} < \infty.\] 
 
$F$ is called globally $\alpha$-H\"older if $||F||_{\alpha}:= \sup_{A\subset \mathbb{H}}||F||_{\alpha, A} < \infty$. H\"older holomorphic functions typically arise as Riemann uniformizing maps of simply connected domains whose boundary may have corners but has no cusps. Such domains are called H\"older domains, examples include domains bounded by various fractal curves such as von Koch snowflake. Riemann uniformizing maps of domains $H_{s,t}$ is by definition given by conformal maps $\varphi(s,t,.)$. Boundaries of domains $H_{s,t}$ can be thus understood by studying the boundary behaviour of maps  $\varphi(s,t,.)$ which in turn is intricately connected to the boundary behaviour of $F$. Our main result is the following theorem answering questions $(a)$-$(c)$ when $F$ is H\"older and $U$ is a sample path of standard Brownian motion $B$ defined on a filtered probability space $(\Omega,\{\mathcal{F}_t\}_{t\geq 0}, \mathcal{F}, \mathbb{P})$. Domains in the corresponding stochastic evolution $\mathcal{H}$ will in fact have continuously differentiable boundaries even if boundary of $F(\mathbb{H})$ may have corners (we however do not require $F$ to be injective). We denote the complex derivative of a holomorphic function $G(z)$ by $G'(z)$. $\overline{\mathbb{H}}= \mathbb{H}\cup \mathbb{R}$ is the closed upper half plane.

\begin{theorem}\label{main-thm}
Let $F$ be an uniformly bounded and globally $\alpha$-H\"older holomorphic function. Then almost surely for all $\omega \in \Omega$ and $U=B(\omega)$,

\begin{enumerate}[label=(\roman*)]
\item $\varphi$ extends continuously to $\Delta \times \overline{\mathbb{H}}$. In fact, $\varphi$ is jointly $\eta$-H\"older on compact subsets of $\Delta \times \overline{\mathbb{H}}$ for all $\eta < \frac{1}{2}$.  Further, for all $s\leq t$, $\varphi(s,t,.)$ is injective on $\overline{\mathbb{H}}$. In particular, boundaries of domains in $\mathcal{H}$ are continuously varying Jordan arcs.  

\item $\varphi'$ extends continuously to $\Delta \times \overline{\mathbb{H}}$ and for each $s\leq t$, $\varphi'(s,t,.)$ is $\eta$-H\"older on compact subsets of $\overline{\mathbb{H}}$ for all $\eta < \alpha$.

\end{enumerate}

\end{theorem}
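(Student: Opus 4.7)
The plan is to work directly with the integral equation
$$\varphi(s,t,z) = z + (B_t - B_s) + \int_s^t F(\varphi(s,u,z))\,du,$$
which makes sense for every $z \in \overline{\mathbb{H}}$ because the globally $\alpha$-H\"older $F$ is uniformly continuous and hence extends continuously to the closure, and which admits unique global solutions by boundedness of $F$. This furnishes the natural extension of $\varphi$ to $\Delta \times \overline{\mathbb{H}}$. For part (i), joint H\"older regularity is built up in pieces. Regularity in $z$ comes from
$$\bigl| \varphi(s,t,z_1) - \varphi(s,t,z_2) \bigr| \leq |z_1 - z_2| + \|F\|_\alpha \int_s^t \bigl|\varphi(s,u,z_1) - \varphi(s,u,z_2)\bigr|^\alpha du,$$
to which a nonlinear Gr\"onwall inequality applies (giving in fact Lipschitz dependence on bounded time windows). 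Regularity in $t$ is immediate from $|\varphi(s,t_1,z) - \varphi(s,t_2,z)| \leq \|F\|_\infty |t_1-t_2| + |B_{t_1} - B_{t_2}|$, which is $\eta$-H\"older for any $\eta<1/2$ on the full Brownian null-set. Regularity in $s$ follows from the cocycle identity $\varphi(s,t,z) = \varphi(s',t,\varphi(s,s',z))$ combined with the previous two estimates.

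Injectivity of $\varphi(s,t,\cdot)$ on $\overline{\mathbb{H}}$ is upgraded from interior injectivity (already available by ODE uniqueness in $\mathbb{H}$) via the reverse inequality
$$\bigl|\varphi(s,t,z_1) - \varphi(s,t,z_2)\bigr| \geq |z_1 - z_2| - \|F\|_\alpha \int_s^t \bigl|\varphi(s,u,z_1) - \varphi(s,u,z_2)\bigr|^\alpha du,$$
valid for short times and then iterated along a partition using the cocycle. Once $\varphi(s,t,\cdot)$ is a continuous injection on $\overline{\mathbb{H}}$, the image of $\mathbb{R}$ is automatically a Jordan arc, and the joint H\"older continuity above yields uniform continuous variation of these arcs in $(s,t)$, completing (i).

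For part (ii) the formal chain rule gives
$$\varphi'(s,t,z) = \exp\!\left( \int_s^t F'(\varphi(s,u,z))\,du \right),$$
and a Cauchy estimate on a disk of radius $\mathrm{Im}(w)/2$ centred at $w \in \mathbb{H}$ produces $|F'(w)| \leq C\|F\|_\alpha\, \mathrm{Im}(w)^{\alpha-1}$; an analogous estimate gives local H\"older control of $F'$ with exponent arbitrarily close to $\alpha$. The definition of $\varphi'$ therefore extends to $z\in\mathbb{R}$ provided the occupation-time bound
$$\int_s^t \mathrm{Im}(\varphi(s,u,z))^{\alpha-1}\,du < \infty$$
holds uniformly on compacta, and once this is secured, continuity of $\varphi'$ up to the boundary and the claimed H\"older exponent $\eta<\alpha$ follow by dominated convergence together with the H\"older bounds on $F'$ and the spatial regularity from (i).

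The main obstacle, and the only place where the random nature of $U=B(\omega)$ is really used, is establishing this occupation-time estimate: in the deterministic setting it is precisely its failure that creates the corners discussed in the introduction. Heuristically, the real-valued Brownian increment forces $\mathrm{Re}\,\varphi$ to sweep an interval of length $\sim\sqrt{u-s}$, and combined with $\mathrm{Im} F \geq 0$ on $\overline{\mathbb{H}}$ and $\mathrm{Im} F > 0$ on $\mathbb{H}$ this should yield a quantitative lower bound $\mathrm{Im}\,\varphi(s,u,z) \geq c(u-s)^{\beta}$ with $\beta(1-\alpha)<1$. I would attempt this by Krylov-type occupation density estimates in expectation, upgraded to an almost-sure statement by Borel--Cantelli over a countable dense set of $(s,z)$ and transferred to all $z$ by the H\"older continuity from part (i), so that a single null set serves for every starting point. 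I expect this occupation-time lower bound to be the genuinely delicate step; the rest of the argument is complex-analytic bookkeeping once that estimate is in hand.
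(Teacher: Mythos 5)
There are genuine gaps at the two places where the noise actually has to do work. First, your treatment of spatial regularity in part (i) is incorrect: from
\[
\bigl|\varphi(s,t,z_1)-\varphi(s,t,z_2)\bigr|\;\le\;|z_1-z_2|+\|F\|_\alpha\int_s^t\bigl|\varphi(s,u,z_1)-\varphi(s,u,z_2)\bigr|^\alpha\,du ,
\]
the Bihari/nonlinear Gr\"onwall lemma only yields the bound $\bigl(|z_1-z_2|^{1-\alpha}+(1-\alpha)\|F\|_\alpha (t-s)\bigr)^{1/(1-\alpha)}$, which does \emph{not} tend to $0$ as $|z_1-z_2|\to 0$ (the comparison ODE $\dot u=Cu^\alpha$, $u(0)=0$, has nonzero solutions). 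So this gives neither Lipschitz nor any modulus of continuity in $z$, and it cannot: the Brownian increment cancels when you subtract the two equations, so your estimate is purely deterministic, and the paper's own example $F(z)=z^\alpha$ shows deterministic flows fail to be Lipschitz up to the boundary. For the same reason your reverse inequality plus iteration does not give boundary injectivity, and you also assume pathwise uniqueness of solutions started on $\mathbb{R}$ ("unique global solutions by boundedness of $F$"), which is exactly the nontrivial point: the paper devotes Section \ref{strong-solution} to it, proving strong uniqueness via the It\^o--Tanaka trick (writing $F(X)-F(Y)$ as $S\,dV$ with $V$ built from the antiderivative $G$ of $F$ and a stochastic integral, then invoking Lemma \ref{S=0}).

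Second, the occupation-time route to part (ii) fails as stated. The bound $\int_s^t \mathrm{Im}\,\varphi(s,u,z)^{\alpha-1}du<\infty$ is false in general: since the noise is real, $\mathrm{Im}\,\varphi$ is increasing with $d\,\mathrm{Im}\,\varphi=\mathrm{Im}\,F(\varphi)\,dt$, and where $F$ has real boundary values (e.g.\ $F(z)=z^\alpha$ started at $x>0$) the flow sits on $\mathbb{R}$ with $\mathrm{Im}\,\varphi\equiv 0$ for a positive time, yet the theorem still holds there. Moreover Krylov-type occupation estimates require nondegenerate noise, whereas here the diffusion is degenerate in the imaginary direction, so the proposed upgrade has no engine behind it. The paper avoids $F'$ and occupation times altogether: Proposition \ref{exp-identity} gives the identity $\varphi(s,t,z)-\varphi(s,t,w)=(z-w)\exp[V(s,t,z,w)]$ with $V=2(I-J)$ expressed through $G$ and the stochastic integral $J$ of $F$ along convex combinations of the two flows; $L^p$ bounds on $\varphi$ and $J$ (via BDG, Dambis--Dubins--Schwarz and Fernique) plus Kolmogorov--Chentsov give a jointly continuous version of $V$, and then injectivity, $\varphi'(s,t,z)=\exp[V(s,t,z,z)]$, and its boundary H\"older continuity all follow at once. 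If you want to salvage your outline, the step to replace is precisely your formula $\varphi'=\exp\bigl(\int_s^t F'(\varphi)\,du\bigr)$: the integral need not converge, and the whole content of the proof is to give that exponent a meaning through the It\^o--Tanaka identity rather than through integrability of $F'$ along the flow.
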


\begin{remark}
We have assumed $F$ to be uniformly bounded only to guarantee that the solution to \eqref{model-eqn} doesn't explode in finite time and the flow $\varphi$ is globally well defined. This assumption can be replaced by Osgood-type conditions on $F$, e.g. $F(z)= O(|z|)$ as $z\to \infty$ which implies non-explosion of solution in finite time. The global H\"older condition on $F$ can also be replaced by local H\"older condition by using a localisation argument. For brevity, we will work with the global boundedness and global H\"older assumptions on $F$ and leave the details of the general case to interested readers.

\end{remark}

\begin{remark}
A classical result known as Kellogg-Warschawski Theorem states that Riemann uniformizing map of a Jordan domain has derivative which is $\eta$-H\"older on $\overline{\mathbb{H}}$ for some $\eta \in (0,1)$ if and only if the boundary of such a domain can be parametrized by a $C^{1+\eta}$ (i.e. differentiable with $\eta$-H\"older derivative) function defined on $[0,1]$. Thus, Theorem \ref{main-thm}-(ii) can be equivalently stated by saying that boundary of $H_{s,t}$ is of class $C^{1+\eta}$. Such an equivalence is no longer true for H\"older domains, i.e. for class $C^{\alpha}$, $\alpha \in (0,1)$. It is not enough for a domain with the boundary which can be parametrized by a H\"older function to be a H\"older domain and one requires the boundary to be H\"older in its conformal parametrization, see Privalov-Hardy-Littlewood Theorem in \cite[Chapter $2$]{garnett-marshall} and \cite[Chapter $3$]{pommerenke}.
\end{remark}

\begin{example}
To emphasize on the smoothing effect of noise term $B$ in Theorem \ref{main-thm}, consider the case of $F(z)= z^{\alpha}$ for $\alpha \in (0,1)$. It is easy to check that $F$ is $\alpha$-H\"older. The function $F$ maps $\mathbb{H}$ to the infinite sector formed by rays $\{y=\tan(\pi \alpha)x, x \geq 0\}$ and $\{x\geq 0\}$ meeting at origin and $F(\mathbb{H})$ has a corner at origin. If we consider the equation $dZ_t = F(Z_t)dt$ without the noise term $B$, the flow $\varphi$ can be explicitly solved and is given by 
\[\varphi(s,t,z) = \{(1-\alpha)(t-s) + z^{1-\alpha}\}^{\frac{1}{1- \alpha}}.\]  

Clearly, $\varphi'$ doesn't extend continuously to $0$ for all $s<t$ and domains $\varphi(s,t, \mathbb{H})$ will in fact have a corner at the point $\varphi(s,t,0)$. Theorem \ref{main-thm} shows that in the presence of noise, such corners get smoothed out.  

\end{example}

\begin{example}
Various examples of random simply connected domains can be constructed with different choices of holomorphic functions $F: \mathbb{H}\to \mathbb{H}$. Such functions are also called Nevanlinna-Herglotz functions and are completely characterized by the representation formula 
\[F(z) = C + Dz + \int_{\mathbb{R}} \biggl(\frac{1}{x-z} - \frac{x}{1+ x^2}\biggr)d\mu(x),\]
where $C, D$ are reals with $D\geq 0$ and $\mu$ is a Borel measure on $\mathbb{R}$ satisfying $\int_{\mathbb{R}}\frac{d\mu(x)}{1+x^2} < \infty$. 

More examples can be produced by iterating the above scheme. Denote $\varphi= \varphi^F$ to show its dependence on $F$. Starting with any function $F$, one can construct a sequence of holomorphic vector fields $\{F_{n}\}_{n\geq 0}$ with $F_0 = F$ and $F_{n+1}(z)= \varphi^{F_n}(0,1,z)$. In the special case when $F(z)= \frac{-2/\kappa}{z}$, $\kappa >0$, $F_1(\mathbb{H})$  is the complement of Schramm-Loewner-Evolution SLE$_{\kappa}$ curve in $\mathbb{H}$ which clearly has a fractal boundary. However, when $\kappa \neq 4$, it was proven in \cite{BasicSLE} that $F_1(\mathbb{H})$ is a H\"older domain and Theorem \ref{main-thm} implies that the second iteration $F_2(\mathbb{H})$ will in fact have continuously differentiable boundary. 
 
\end{example}

%

Theorem \ref{main-thm} has its root in earlier works on the phenomena of regularization by noise, see \cite{davie},\cite{fgp},\cite{beck-et-el},\cite{Mario-thesis} and references there in. A similar result as Theorem \ref{main-thm} was proven in \cite[Theorem $5$]{fgp} using partial differential equation (PDE) techniques. A key difference between \cite[Theorem $5$]{fgp} and Theorem \ref{main-thm} is that though the equation \eqref{model-eqn} is a two dimensional system of real valued equations, a one dimensional noise term $B$ is enough to produce smoothing effects. The fact that $F$ is complex differentiable on $\mathbb{H}$ is very crucial in our case so that the set where $F$ is irregular is indeed one dimensional. We however believe that PDE techniques from \cite{fgp} can be used to give a different proof of Theorem \ref{main-thm}, see section \ref{different-proof} for a brief sketch of their approach and how to adapt it to our case. Our proof of Theorem \ref{main-thm} is purely based on stochastic analysis and it will even allow one to consider non-H\"older functions $F$ satisfying certain conditions. For such conditions on $F$, we refer the interested reader to \cite{abs} where we have used the same technique as in this paper but in a slightly different context. \\

To prove Theorem \ref{main-thm}, we will consider \eqref{model-eqn} starting from a point on $\mathbb{R}= \partial\mathbb{H}$ interpreted as a stochastic differential equation (SDE). We will establish the existence and uniqueness of strong solution to such SDEs. But since unique strong solutions are only well defined upto a set of $\mathbb{P}$ measure $0$, we require some additional arguments to prove the almost sure joint continuity of $\varphi$. Continuity of $\varphi'$ will be established by obtaining an exponential identity for difference of solutions to \eqref{model-eqn} starting from two different points, see Proposition \ref{exp-identity} for details. \\

The organization of this article is as follows. In section \ref{prem-lemmas}, we establish some preparatory lemmas used in the proof of Theorem \ref{main-thm}. In section \ref{strong-solution}, we prove the strong uniqueness of solution to \eqref{model-eqn} starting on $\mathbb{R}$. Section \ref{proof-of-main-thm} contains the proof of Theorem \ref{main-thm}. We close the article with some concluding remarks in section \ref{conclusion}. \\

{\bf Acknowledgements:} I would like to thank Siva Athreya, Suprio Bhar, Peter Friz, Fredrik Viklund and Yilin Wang for various discussions and their fruitful comments. I acknowledge the financial support in the form of KTH visiting researcher scholarship through research grant of Fredrik Viklund from G\"oran Gustafsson foundation.

\section{Some Preliminary Lemmas.} \label{prem-lemmas}

\subsection{It\^o formula in complex $1$-dimension.} \label{ito-formula}

We will use It\^o formula as a fundamental tool in our proof. We recall it here in a non-standard form. We will be interested in $\overline{\mathbb{H}}$ valued semimartingales of form $W_t = L_t + B_t $ where $L$ is a $\overline{\mathbb{H}}$ valued almost surely continuous and bounded variation process and $B$ is real valued standard Brownian motion both adapted to the filtration $\{\mathcal{F}_t\}_{t\geq 0}$. $W$ is actually a two dimensional semimartingale with the real valued martingale component, but we will view $W$ as a complex $1$-dimensional process and use the complex field multiplication to define quadratic variation process $[W]$ of $W$ in the usual sense, i.e. 
\[[W]_t := \lim\limits_{|\mathcal{P}| \to 0} \sum_{[u,v] \in \mathcal{P}} (W_v - W_u)^2,\]
where $\mathcal{P}$ denote partitions of $[0,t]$ and the limit is taken in probability. Clearly, since $L$ is of bounded variation and $B$ is real valued, $[W]_t = t$. It\^o formula gives the semimartingale decomposition for processes of form $G(B_t)$ where $G$ is a continuously twice real differentiable function. Since we are working with complex field operations, we naturally consider complex differentiable or holomorphic functions $G$ and compute the semimartingale decomposition of process $G(W_t)$ as sum of a $\mathbb{C}$ valued martingale and a bounded variation process.

\begin{lemma}\label{complex-ito-formula}
Let $G: \overline{\mathbb{H}} \to \mathbb{C}$ be a holomorphic function, i.e. $G$ is holomorphic in an open set containing $\overline{\mathbb{H}}$. Then almost surely,
\[G(W_t) = G(W_0) + \int_0^t G'(W_r)dW_r + \frac{1}{2}\int_0^t G''(W_r)d[W]_r.\]
\end{lemma}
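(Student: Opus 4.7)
The plan is to reduce the complex Itô formula for $G$ to two applications of the standard real two-dimensional Itô formula, one for $\mathrm{Re}\,G$ and one for $\mathrm{Im}\,G$, and then glue the outputs back together using the Cauchy--Riemann equations.

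First I would decompose $W_t = X_t + i Y_t$ into real and imaginary parts. Writing $L_t = A_t + i C_t$ with $A, C$ real continuous bounded variation processes, we have $X_t = A_t + B_t$ and $Y_t = C_t$. Therefore $X$ is a real semimartingale whose martingale part equals $B$, while $Y$ is a continuous process of bounded variation. This gives the quadratic variation identities
\[
[X]_t = [B]_t = t, \qquad [Y]_t = 0, \qquad [X,Y]_t = 0,
\]
consistent with $[W]_t = t$ from the statement.

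Next, because $G$ is holomorphic on an open neighbourhood of $\overline{\mathbb{H}}$, writing $G(x+iy) = u(x,y) + i v(x,y)$ yields real-valued functions $u,v \in C^{\infty}$ in a neighbourhood of $\{(x,y) : y \geq 0\}$, satisfying the Cauchy--Riemann equations $u_x = v_y$ and $u_y = -v_x$, so that harmonicity gives $u_{xx} = -u_{yy}$ and $v_{xx} = -v_{yy}$, together with $G'(z) = u_x + i v_x$ and $G''(z) = u_{xx} + i v_{xx}$. I would then apply the classical real two-dimensional Itô formula to $u(X_t, Y_t)$ and $v(X_t, Y_t)$; using the quadratic variation computations above, the mixed and $[Y]$-contributions disappear and only the $\tfrac12 u_{xx}\,dt$ and $\tfrac12 v_{xx}\,dt$ second-order terms survive, producing
\[
dG(W_t) = \bigl(u_x + i v_x\bigr)\,dX_t + \bigl(u_y + i v_y\bigr)\,dY_t + \tfrac{1}{2}(u_{xx} + i v_{xx})\,dt.
\]

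Finally I would recognize the right-hand side as the desired expression. The Cauchy--Riemann relations rewrite $u_y + i v_y = -v_x + i u_x = i(u_x + i v_x) = i G'(W_t)$, so that
\[
(u_x + i v_x)\,dX_t + (u_y + i v_y)\,dY_t = G'(W_t)\bigl(dX_t + i\,dY_t\bigr) = G'(W_t)\,dW_t,
\]
while $\tfrac12 (u_{xx} + i v_{xx})\,dt = \tfrac12 G''(W_t)\,d[W]_t$, completing the formula. There is no serious obstacle here; the only point to watch is the bookkeeping that ensures $Y$ contributes no second-order terms, which is precisely what distinguishes this complex formulation from the generic two-dimensional Itô formula and justifies the absence of a $d[\bar W, W]$-type term in the statement.
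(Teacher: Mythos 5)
Your argument is correct: with $W_t = (A_t+B_t) + iC_t$ the only nonvanishing bracket is $[X]_t=t$, the real two-dimensional It\^o formula applied to $u=\mathrm{Re}\,G$ and $v=\mathrm{Im}\,G$ (smooth on an open neighbourhood of $\overline{\mathbb{H}}$, which contains the path of $W$) leaves only the $\tfrac12 u_{xx}\,dt$ and $\tfrac12 v_{xx}\,dt$ second-order terms, and the Cauchy--Riemann identities $u_y+iv_y = iG'$ and $G''=u_{xx}+iv_{xx}$ reassemble the first- and second-order contributions into $G'(W_r)\,dW_r$ and $\tfrac12 G''(W_r)\,d[W]_r$; the harmonicity relations you quote are not even needed, since the $[Y]$ and $[X,Y]$ terms vanish identically. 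This is, however, a different route from the paper's: the paper argues directly in the complex variable, expanding $G$ in a power series locally around each point of $\overline{\mathbb{H}}$ and applying It\^o's formula with complex field multiplication to the monomials, which keeps the holomorphy front and centre but implicitly requires a localization/stopping argument to patch the local expansions and a justification for termwise stochastic integration of the series. Your reduction to the classical real It\^o formula avoids both of those issues and makes transparent exactly why the result has the one-dimensional form (no $\partial\bar\partial$-type term): the martingale part of $W$ is real, so only $[X]$ survives, and holomorphy converts the two first-order real terms into a single complex derivative. Either proof is acceptable; yours is arguably the more robust bookkeeping, while the paper's is the more economical statement.
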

 
\begin{proof}
The proof follows easily using the power series expansion of holomorphic functions locally around each point in $\overline{\mathbb{H}}$ and details are left to the reader. 
\end{proof}

\subsection{Linear differential equations in the complex plane.}\label{RS-integral}

We will use linear differential equations of form $dS_t = S_tdV_t$ as another basic tool in our proof. We first recall some standard facts about Riemann-Stieltjes integrals. Given continuous functions $M,N:[0,T] \to \mathbb{C}$ such that $M$ is of bounded variation, the Riemann-Stieltjes integral
 \[ \int_0^t M_rdN_r := \lim\limits_{|\mathcal{P}|\to 0}\sum_{\xi \in [u,v]\in \mathcal{P}}M_{\xi}(N_v - N_u)\]
is well defined. Further, the integration by parts formula holds: 
\begin{equation}
\int_0^t M_rdN_r + \int_0^t N_rdM_r = M_tN_t - M_0N_0.
\end{equation}
In particular, if $\{M^k\},\{N^k\}$ are sequences of functions such that $M^k \to M$ in bounded variation norm and $N^k \to N$ in supremum norm as $k \to \infty$, then \[ \int_0^.M_r^kdN_r^k \to \int_0^. M_rdN_r \hspace{2mm} \mbox{in supremum norm as $k \to \infty$.} \]\\
Next, given a continuous function $V:[0,T] \to \mathbb{C}$, consider equations of form 
\begin{equation}\label{linear-ODE}
S_t = S_0 + \int_0^t S_rdV_r,
\end{equation}
where we will a priori assume that $S:[0,T] \to \mathbb{C}$ is a continuous bounded variation function and the right hand side of equation \eqref{linear-ODE} is well defined as a Riemann-Stieltjes integral. We will crucially use the following lemma. 

\begin{lemma}\label{S=0}
Let S be a continuous bounded variation function and $V$ be a continuous function satisfying \eqref{linear-ODE}. Then, 
\begin{enumerate}[label=(\roman*)]
\item if $S_0 = 0$, then $S \equiv 0$.
\item if $S_0 \neq 0$, then $V$ is of bounded variation and $S_t = S_0 \exp[V_t- V_0]$.
\end{enumerate}
\end{lemma}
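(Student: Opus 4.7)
The plan is to prove part (ii) first and deduce part (i) from it as a corollary.

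For part (ii), fix $S_0 \neq 0$. By continuity of $S$, let $\tau \in (0,T]$ be maximal such that $S$ is nonzero throughout $[0,\tau]$. On this interval, choose a continuous branch of logarithm and set $L_t := \log(S_t/S_0)$ with $L_0 = 0$. Since $S$ is continuous and of bounded variation and is bounded away from zero on $[0,\tau]$, the same is true of $1/S$, so $L$ is continuous and of bounded variation. The standard change-of-variable formula for Riemann-Stieltjes integrals against continuous BV integrators (applied to the holomorphic function $\log$) yields $L_t = \int_0^t dS_r/S_r$.

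The heart of the argument is to identify this integral with $V_t - V_0$. For a partition $\mathcal{P} = \{0 = t_0 < t_1 < \cdots < t_n = t\}$ of $[0,t]$, I split the left-endpoint Riemann sum as
\[
\sum_i \frac{S_{t_{i+1}} - S_{t_i}}{S_{t_i}} \;=\; \sum_i \frac{1}{S_{t_i}} \int_{t_i}^{t_{i+1}} S_r \, dV_r \;=\; \sum_i (V_{t_{i+1}} - V_{t_i}) \;+\; R(\mathcal{P}),
\]
where $R(\mathcal{P}) := \sum_i \frac{1}{S_{t_i}} \int_{t_i}^{t_{i+1}} (S_r - S_{t_i}) \, dV_r$. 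The first sum telescopes to $V_t - V_0$. For the remainder, integration by parts on each subinterval gives
\[
\int_{t_i}^{t_{i+1}} (S_r - S_{t_i}) \, dV_r \;=\; (S_{t_{i+1}} - S_{t_i})(V_{t_{i+1}} - V_{t_i}) \;-\; \int_{t_i}^{t_{i+1}} (V_r - V_{t_i}) \, dS_r.
\]
Writing $\omega(\mathcal{P}) := \max_i \sup_{r \in [t_i, t_{i+1}]} |V_r - V_{t_i}|$, this bounds $|R(\mathcal{P})| \leq 2\omega(\mathcal{P}) |S|_{\mathrm{TV}[0,t]} / \min_{[0,t]}|S|$, which vanishes as $|\mathcal{P}| \to 0$ by uniform continuity of $V$. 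Hence $L_t = V_t - V_0$ on $[0,\tau]$, so $V$ is BV there and $S_t = S_0 \exp(V_t - V_0)$. Since the exponential is never zero, the identity itself forces $S$ to remain nonvanishing, so one can take $\tau = T$.

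Part (i) then follows by a contradiction. If $S_0 = 0$ but $S_{t_1} \neq 0$ for some $t_1$, set $\tau := \sup\{s \leq t_1 : S_s = 0\}$; by continuity $S_\tau = 0$ and $S$ is nonvanishing on $(\tau, t_1]$. For any $t_0 \in (\tau, t_1)$, the equation restricted to $[t_0, t_1]$ has nonzero initial value, so part (ii) gives $S_{t_1} = S_{t_0} \exp(V_{t_1} - V_{t_0})$. Letting $t_0 \to \tau^+$ sends $S_{t_0} \to 0$ while $\exp(V_{t_1} - V_{t_0}) \to \exp(V_{t_1} - V_\tau)$, a finite quantity, forcing $S_{t_1} = 0$, a contradiction. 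The main obstacle is the substitution step $(1/S)\,dS = dV$ used to prove part (ii): since $V$ is only continuous and not assumed BV, no general substitution rule for Riemann-Stieltjes integrals is available, and the Riemann-sum computation above combined with the termwise integration-by-parts bound is the cleanest way to sidestep this.
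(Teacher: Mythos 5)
Your proof is correct, and it reaches the conclusion by a route that is organized differently from the paper's. The paper proves part (i) first: it takes an interval $(a,b)$ with $S_a=0$, $S\neq 0$ on $(a,b)$, writes $\int_{a+\epsilon}^t dS_r/S_r = V_t - V_{a+\epsilon}$, takes real parts and lets $\epsilon\to 0+$ so that $\log|S_{a+\epsilon}|\to-\infty$ while $\mathrm{Re}(V_{a+\epsilon})$ stays bounded; part (ii) is then obtained by checking that $\tilde S_t=S_0\exp[V_t-V_0]$ solves the same linear equation and invoking part (i) for uniqueness. You go in the opposite direction: you derive the exponential formula directly on an interval of nonvanishing, use it to rule out zeros of $S$ (your limit $t_0\to\tau^+$ is the multiplicative counterpart of the paper's $\log|S|\to-\infty$ blow-up), and then deduce (i) from (ii) by letting the left endpoint approach a zero. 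The substantive added value of your write-up is the justification of the identity $\int dS_r/S_r = V_t-V_0$, which the paper dismisses with ``it easily follows'': since $V$ is only continuous, one cannot substitute $dS_r=S_r\,dV_r$ blindly, and your left-endpoint Riemann sums with the termwise integration-by-parts bound $|R(\mathcal{P})|\leq 2\,\omega(\mathcal{P})\,|S|_{\mathrm{TV}[0,t]}/\min_{[0,t]}|S|$ is exactly the right way to push the oscillation of $V$ onto the variation of $S$. Two small points of presentation: a ``maximal $\tau$ with $S\neq 0$ on $[0,\tau]$'' need not exist if $S$ has a zero, so it is cleaner to work on $[0,t]$ for $t$ strictly before the first zero $\sigma:=\inf\{t:S_t=0\}$ and let $t\uparrow\sigma$ in the identity (which is precisely the mechanism you invoke); and the continuous branch of $\log(S_t/S_0)$ should be understood as a lift of the path $S$ through the exponential covering, after which the change-of-variable formula applies locally in time. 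Neither affects correctness.
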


\begin{proof}
For the part $(i)$, let $Z(S)= \{t| S_t = 0\}$ denote the zero set of $S$ which contains $t=0$ by assumption. If $S$ is not identically zero, then $[0,T]\setminus Z(S)$ is a non empty open set and it can be written as countable union of disjoint open intervals. Thus, there exists an interval $(a,b)$ such that $S_a =0$ and $S_t \neq 0$ for all $t \in (a,b)$. Then it easily follows that for all $\epsilon >0$ small enough and $t\in (a+ \epsilon, b)$,
\[ \int_{a+ \epsilon}^t \frac{dS_r}{S_r} = V_t - V_{a+ \epsilon}.\]
Evaluating the real part both sides gives $\log|S_t| - \log|S_{a+ \epsilon}| = Re(V_t) - Re(V_{a+ \epsilon})$. Taking $\epsilon \to 0+$ gives a contradiction. Thus $S$ is identically zero. \\

For the part $(ii)$, if $S_0 \neq 0$, it follows from the proof of part $(i)$ that $S_t \neq 0$ for all $t$ and 
\[ V_t = V_0 + \int_0^t \frac{dS_r}{S_r}.\]
Thus $V$ is of bounded variation. Define $\tilde{S}_t = S_0 \exp[V_t- V_0]$. Then $\tilde{S}$ is of bounded variation and solves $d\tilde{S}_t = \tilde{S}_tdV_t$. So, $d\{S-\tilde{S}\}_t = \{S-\tilde{S}\}_t dV_t$ with $S_0-\tilde{S}_0 = 0$. It then follows from part $(i)$ that $S_t = \tilde{S}_t$ which finishes the proof. 
\end{proof}

\section{Uniqueness of strong solution to \eqref{model-eqn} starting on $\mathbb{R}$.} \label{strong-solution}

In this section we fix a $x \in \mathbb{R}$ ($x$ can also be a $\mathcal{F}_0$ measurable random variable) and consider the SDE 
\begin{equation}\label{x-SDE}
 X_t = x + \int_0^tF(X_r)dr + B_t,
\end{equation}
where $F: \mathbb{H} \to \mathbb{H}$ is bounded and $\alpha$-H\"older and solution $X$ is assumed to be $\overline{\mathbb{H}}$ valued. Since $F$ is H\"older, it extends continuously to $F: \overline{\mathbb{H}} \to \overline{\mathbb{H}}$ and the above equation is well defined. We next establish the existence and uniqueness of $\overline{\mathbb{H}}$ valued strong solution, also see \cite{KR} for a related work. 

\begin{proposition}\label{uniqueness}
There exists a unique (upto indistinguishability) continuous process $X$ adapted to the filtration $\{\mathcal{F}_t\}_{t\geq 0}$, taking values in $\overline{\mathbb{H}}$ and satisfying equation \eqref{x-SDE}.
\end{proposition}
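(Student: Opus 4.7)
The plan is to handle existence and uniqueness separately, with pathwise uniqueness being the heart of the matter. For existence of a weak $\overline{\mathbb{H}}$-valued solution I would use the standard compactness route: $F$ extends continuously and boundedly to $\overline{\mathbb{H}}$, so regularizing $F$ by smooth $F_n$, solving the resulting Lipschitz SDEs, using the uniform drift bound to obtain tightness on path space, and extracting a subsequential weak limit produces a weak solution to \eqref{x-SDE}. Once pathwise uniqueness is established, Yamada--Watanabe promotes it to the unique strong solution.

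To prove pathwise uniqueness, let $X^1, X^2$ be two $\overline{\mathbb{H}}$-valued adapted continuous solutions of \eqref{x-SDE} with the same initial value $x$. The Brownian term cancels in their difference, so
\[ D_t := X^1_t - X^2_t = \int_0^t \bigl[F(X^1_r) - F(X^2_r)\bigr]\, dr \]
is a continuous $\mathbb{C}$-valued bounded-variation process with $D_0 = 0$. Exploiting the holomorphy of $F$ on $\mathbb{H}$, I would express
\[ F(X^1_r) - F(X^2_r) = D_r\, \Gamma_r, \qquad \Gamma_r := \int_0^1 F'\bigl((1-\theta) X^2_r + \theta X^1_r\bigr)\, d\theta, \]
deforming the contour into $\mathbb{H}$ via Cauchy's theorem whenever the straight segment escapes the domain of analyticity. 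Setting $V_t := \int_0^t \Gamma_r\, dr$, the difference satisfies the linear complex equation $dD_t = D_t\, dV_t$, and Lemma~\ref{S=0}(i) applied pathwise then forces $D \equiv 0$.

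The main obstacle is to show that $V$ is a.s.\ well-defined and continuous on $[0,T]$, equivalently that $\Gamma_r$ is integrable near times when either $X^1_r$ or $X^2_r$ touches $\mathbb{R}$, most delicately in a neighbourhood of $r = 0$. A Cauchy-type estimate applied to the $\alpha$-Hölder holomorphic function $F$ yields $|F'(z)| \lesssim (\operatorname{Im} z)^{\alpha - 1}$, so I must produce a lower bound on $\operatorname{Im}(X^i_r)$. Since $\operatorname{Im}(X^i_r) = \int_0^r \operatorname{Im}(F(X^i_u))\, du$ and the Brownian excursions of $\operatorname{Re}(X^i)$ force $\operatorname{Im}(F(X^i_u))$ to be strictly positive on a typical set of $u$'s, one expects a polynomial lower bound on $\operatorname{Im}(X^i_r)$ from zero, which combined with the $(\operatorname{Im} z)^{\alpha - 1}$ singularity yields $\int_0^t |\Gamma_r|\, dr < \infty$. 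Quantifying this lower bound---likely by analysing the stochastic convolution $\int_0^r \operatorname{Im}(F(x + B_u))\, du$ and controlling the perturbation induced by the drift correction---is where the bulk of the technical work will lie. Once $V$ is continuous, Lemma~\ref{S=0}(i) yields $D \equiv 0$, and combined with weak existence and Yamada--Watanabe, the proposition follows.
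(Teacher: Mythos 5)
Your reduction to the linear equation $dD_t = D_t\,dV_t$ and the appeal to Lemma~\ref{S=0}(i) is exactly the right skeleton, but the way you construct $V$ contains a genuine gap: you define $V_t=\int_0^t\Gamma_r\,dr$ with $\Gamma_r=\int_0^1 F'((1-\theta)X^2_r+\theta X^1_r)\,d\theta$ and then try to make this finite via a lower bound on $\operatorname{Im}(X^i_r)$ together with $|F'(z)|\lesssim(\operatorname{Im} z)^{\alpha-1}$. No such lower bound holds in general. Since $F$ maps $\mathbb{H}$ to $\mathbb{H}$ but its boundary extension may have $\operatorname{Im}F=0$ on whole intervals of $\mathbb{R}$ (e.g.\ $F(z)=z^{\alpha}$ on the positive axis), a solution started at $x\in\mathbb{R}$ can remain on $\mathbb{R}$ for a positive (random) amount of time: $\operatorname{Im}(X_t)=\int_0^t\operatorname{Im}(F(X_r))\,dr$ simply stays $0$ as long as the real part stays in the zero set of $\operatorname{Im}F$, which happens with positive probability. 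On such a time interval the segment joining $X^1_r$ and $X^2_r$ lies on $\mathbb{R}$, where $F'$ need not exist at all (for a generic bounded $\alpha$-H\"older $F$, the boundary values of $F$ can be nowhere differentiable), and contour deformation into $\mathbb{H}$ does not rescue the factorization: any contour in $\overline{\mathbb{H}}$ joining two nearby real points either runs along the boundary, where $|F'|$ blows up, or has length of larger order than $|D_r|$, and the best one recovers is the H\"older bound $|F(X^1_r)-F(X^2_r)|\lesssim|D_r|^{\alpha}$, i.e.\ $\Gamma_r\sim|D_r|^{\alpha-1}$, which gives no useful control. So the "bulk of the technical work" you defer is not merely technical; the program of proving $\int_0^t|\Gamma_r|\,dr<\infty$ pathwise fails precisely in the regime (solutions on or near $\mathbb{R}$) where uniqueness is in question, and it also makes no use of the noise, whereas uniqueness of this kind is false for the noiseless equation.

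The paper's proof avoids ever integrating $F'$ along the path. Writing $G'=F$ (so $G$, like $F$, extends continuously to $\overline{\mathbb{H}}$ because $F$ is bounded and H\"older), it applies the complex It\^o formula (Lemma~\ref{complex-ito-formula}) to the semimartingales $W^{\theta}_t=\theta X_t+(1-\theta)Y_t$, whose quadratic variation is $[W^\theta]_t=t$, to replace the divergent quantity $\int_0^t\int_0^1F'(W^{\theta}_r)\,d\theta\,dr$ by
\[
V_t \;=\; 2\int_0^1\Bigl\{G(W^{\theta}_t)-G(W^{\theta}_0)-\int_0^t F(W^{\theta}_r)\,dW^{\theta}_r\Bigr\}d\theta,
\]
which is manifestly continuous since it involves only $G$ and stochastic integrals of the bounded function $F$. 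The identity $\int_0^t\{F(X_r)-F(Y_r)\}dr=\int_0^t S_r\,dV_r$ is then proved by the vertical-shift approximation $F_y(z)=F(z+iy)$, Fubini, and passage to the limit in the Riemann--Stieltjes integral using that $S$ is of bounded variation; Lemma~\ref{S=0}(i) finishes the argument (note it only requires $V$ continuous, not of bounded variation). This It\^o--Tanaka step is exactly where the one-dimensional Brownian noise produces the smoothing, and it is the idea missing from your proposal; your existence argument via regularization, tightness and Yamada--Watanabe is fine (the paper instead invokes Picard iteration), but the uniqueness part needs to be rebuilt around such a device rather than around integrability of $F'$ along the flow.
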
 

\begin{proof}
We first establish the uniqueness of solution $X$. Let $Y$ be another such solution. For $\theta \in [0,1]$, define $W^{\theta}_t = \theta X_t + (1-\theta)Y_t$. Note that $W^{\theta}$ is a $\overline{\mathbb{H}}$ valued semimartingale of the same form as considered in section \ref{ito-formula}. Denote $S_t = X_t - Y_t$. Then, 
\begin{equation}\label{S-identity}
 S_t = \int_0^t \{F(X_r) - F(Y_r)\}dr.
\end{equation}
Note that $S$ is of bounded variation. Since $\mathbb{H}$ is simply connected and $F$ is holomorphic on $\mathbb{H}$, there exists a holomorphic function $G$ (unique upto constant) defined on $\mathbb{H}$ such that $G'(z)= F(z)$. Since $F$ is continuous on $\overline{\mathbb{H}}$, $G$ also extends continuously to $\overline{\mathbb{H}}$. Let
\[V_t = 2\int_0^1\biggl\{G(W_t^{\theta}) - G(W_0^{\theta}) - \int_0^t F(W_r^{\theta})dW_r^{\theta}\biggr\}d\theta.\]
Note that $V$ is almost surely a continuous process. We claim that almost surely,
\begin{equation}\label{ito-tanaka}
\int_0^t \{F(X_r) - F(Y_r)\}dr = \int_0^t S_rdV_r,
\end{equation} 
where the right hand side is a Riemann-Stieltjes integral as discussed in section \ref{RS-integral}. Equation \eqref{ito-tanaka} is an instance of famously known It\^o-Tanaka trick. For proving it, introduce functions $F_y(z):= F(z + iy), G_y(z):= G(z+ iy)$ for $y>0$. Functions $F_y, G_y$ are defined and holomorphic in an open set (depending on $y$) containing $\overline{\mathbb{H}}$. Using Fubini Theorem and Lemma \ref{complex-ito-formula}, 
\begin{equation}\label{y-eqn}
\int_0^t \{F_y(X_r) - F_y(Y_r)\}dr = \int_0^t\int_0^1 F_y'(W_r^\theta)S_rd\theta dr = \int_0^t S_rdV_r^y,
\end{equation}
where 
\begin{align*}
V^y_t &= \int_0^t\int_0^1F_y'(W_r^\theta)d\theta dr \\
&= \int_0^1\int_0^tF_y'(W_r^\theta)drd\theta \\
&= 2\int_0^1\biggl\{G_y(W_t^\theta) - G_y(W_0^\theta) - \int_0^t F_y(W_r^\theta)dW_r^\theta\biggr\}d\theta.
\end{align*}
Note that $F_y, G_y$ converge uniformly on compacts subsets of $\overline{\mathbb{H}}$ to $F, G$ respectively as $y\to 0+$. Thus, from standard facts about It\^o stochastic integrals, almost surely $V^y$ converge uniformly to $V$ as $y\to 0+$.  Since $S$ is of bounded variation, using properties of Riemann-Stieltjes integral discussed in section \ref{RS-integral}, 
\[ \int_0^.S_rdV_r^y \to \int_0^. S_rdV_r \hspace{2mm} \mbox{uniformly as $y \to 0+$}.\]
Thus, taking $y\to 0+$ in equation \eqref{y-eqn} proves equation \eqref{ito-tanaka}. Finally, combining \eqref{S-identity},\eqref{ito-tanaka} and using Lemma \ref{S=0} implies $S\equiv0$ or $X\equiv Y$ which finishes the proof. \\
The existence of solution $X$ can be easily established using standard Picard iteration method and details are left to the reader.
\end{proof}

\section{Proof of Theorem \ref{main-thm}.} \label{proof-of-main-thm}

We complete the proof of Theorem \ref{main-thm} in this section by providing candidates for continuous extensions of $\varphi$ and $\varphi'$ to $\Delta\times \overline{\mathbb{H}}$. For $(s,t,z) \in \Delta\times \overline{\mathbb{H}}$, with a slight abuse of notation, we use the same symbol $\varphi(s,t,z)$ to denote the random variable defined by the value at time $t$ of the strong solution to \eqref{model-eqn} with the initial condition $Z_s= z$. Note that we have used Proposition \ref{uniqueness} to produce the unique strong solution and the joint law of random variables $\{\varphi(s,t,z)\}_{(s,t,z) \in \Delta\times \overline{\mathbb{H}}}$ is uniquely well defined. It also follows from Proposition \ref{uniqueness} that for all fixed $s\leq u \leq t$ and $z\in \overline{\mathbb{H}}$, almost surely 
\begin{equation}\label{flow-property}
\varphi(s,t,z) = \varphi(u,t, \varphi(s,u,z))\hspace{2mm}\mbox{and}\hspace{2mm}\varphi(s,s,z)=z.
\end{equation}
We call equation \eqref{flow-property} as the \textit{Flow property}. Define the following symbols similarly as in section \ref{strong-solution}. For $\theta\in[0,1]$, $(s,t)\in \Delta$ and $z,w\in \overline{\mathbb{H}}$,
\[W^{\theta}(s,t,z,w) := \theta\varphi(s,t,z)+ (1-\theta)\varphi(s,t,w), \]

\begin{align*}
I^\theta(s,t,z,w) := G(&W^{\theta}(s,t,z,w)) - G(W^{\theta}(s,s,z,w)) \\ 
&- \int_s^t F(W^{\theta}(s,r,z,w))(\theta F(\varphi(s,r,z)) + (1-\theta)F(\varphi(s,r,w)))dr,
\end{align*}
where $G'= F$. Further define, 
\[ J^\theta(s,t,z,w) := \int_s^tF(W^{\theta}(s,r,z,w))dB_r,\]
\begin{equation*}
I(s,t,z,w) := \int_0^1 I^\theta(s,t,z,w)d\theta, \hspace{2mm} J(s,t,z,w):= \int_0^1 J^\theta(s,t,z,w)d\theta
\end{equation*}
and \[V(s,t,z,w) := 2\{I(s,t,z,w) - J(s,t,z,w)\}.\]\\
The following Proposition is the key to Theorem \ref{main-thm}.

\begin{proposition}\label{exp-identity} Let $F$ be uniformly bounded globally $\alpha$-H\"older function. Then the following holds:
\begin{enumerate}[label=(\roman*)]
    \item For each fixed $s,t,z,w$, almost surely 
\begin{equation*}\label{derivative-identity}
\varphi(s,t,z) - \varphi(s,t,w) = (z-w) \exp[V(s,t,z,w)].
\end{equation*}
\item For $p \geq 2 $, there exists a constant $ C = C(p, F, T)$ depending only on $p,$ $F$ and $T$ such that for all $0\leq s \leq t \leq T$, $0\leq u\leq v\leq T$ and $z,w \in \overline{\mathbb{H}}$, 
\begin{equation*} \label{Lp-bound}
 \mathbb{E}[| \varphi(s,t,z) - \varphi(u, v, w) |^p] \leq  C (|s-u|^{\frac{p}{2}}+|t-v|^{\frac{p}{2}}+|z-w|^p).
\end{equation*}
\item For $p \geq \frac{2}{\alpha} $, there exists a constant $ C = C(p, F, T)$ depending only on $p$, $F$ and $T$ such that for all $0\leq s \leq t \leq T$, $0\leq u\leq v\leq T$ and $z, w,\tilde{z},\tilde{w} \in \overline{\mathbb{H}}$,
\begin{equation*} \label{stochastic-integral-continuous}\mathbb{E}[|J(s,t, z, w) - J(u, v, \tilde{z}, \tilde{w})|^p] \leq C( |s-u|^{\frac{p\alpha}{2}} + |t-v|^{\frac{p\alpha}{2}} + |z-\tilde{z}|^{p\alpha} + |w-\tilde{w}|^{p\alpha}).
\end{equation*}
  
  \end{enumerate}
\end{proposition}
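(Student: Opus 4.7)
For part (i), the plan is to interpret the difference $S_t := \varphi(s,t,z) - \varphi(s,t,w)$ as a solution of a linear equation of the form \eqref{linear-ODE} and then apply Lemma \ref{S=0}, in direct parallel with the proof of Proposition \ref{uniqueness}. Since both $\varphi(s,\cdot,z)$ and $\varphi(s,\cdot,w)$ solve \eqref{model-eqn} with the \emph{same} Brownian motion, the noise cancels and $S$ is of bounded variation with $\dot S_r = F(\varphi(s,r,z)) - F(\varphi(s,r,w))$. The telescoping identity
\[ F(X) - F(Y) = (X-Y) \int_0^1 F'(\theta X + (1-\theta) Y)\, d\theta \]
rewrites this as $\dot S_r = S_r \int_0^1 F'(W^\theta(s,r,z,w))\, d\theta$. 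The It\^o--Tanaka step then identifies $V$ with a primitive of the right-hand factor: applying Lemma \ref{complex-ito-formula} to the shifted antiderivatives $G_y(\zeta) := G(\zeta+iy)$ (with $F_y := G_y'$) as in Proposition \ref{uniqueness}, and letting $y \to 0+$, one obtains
\[ V(s,t,z,w) = \int_s^t \int_0^1 F'(W^\theta(s,r,z,w))\, d\theta\, dr, \]
so that $dS_r = S_r\, dV_r$ with $S_s = z-w$. Lemma \ref{S=0}(ii) then yields the exponential identity when $z \neq w$, and the case $z=w$ is trivial by strong uniqueness.

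For part (ii), without loss of generality take $s \leq u$ and decompose
\[ \varphi(s,t,z) - \varphi(u,v,w) = [\varphi(s,t,z) - \varphi(s,t,w)] + [\varphi(s,t,w) - \varphi(s,v,w)] + [\varphi(s,v,w) - \varphi(u,v,w)]. \]
The first bracket equals $(z-w)\exp V(s,t,z,w)$ by part (i). The second is a direct time increment $\pm \int_{t \wedge v}^{t \vee v} F(\varphi(s,r,w))\, dr + (B_t - B_v)$ whose $L^p$-norm is controlled by $C|t-v|^{p/2}$ via $\|F\|_\infty < \infty$ and Burkholder--Davis--Gundy. For the third, the flow property \eqref{flow-property} gives $\varphi(s,v,w) = \varphi(u,v,\varphi(s,u,w))$, and part (i) again rewrites it as $(\varphi(s,u,w) - w)\exp V(u,v,\varphi(s,u,w), w)$; the displacement $\varphi(s,u,w) - w$ has $L^p$-norm $\leq C|u-s|^{p/2}$ by the same BDG estimate. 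The whole estimate therefore reduces to a uniform exponential moment bound $\sup \mathbb{E}[\exp(p\,\mathrm{Re}\,V)] < \infty$. Crucially, the representation $V = 2(I - J)$ avoids $F'$ entirely: $|I|$ is bounded by $C(1 + |B_t - B_s|)$ since $G$ is Lipschitz on $\overline{\mathbb{H}}$ with constant $\|F\|_\infty$, while $\mathrm{Re}\, J^\theta$ is a real martingale with quadratic variation at most $\|F\|_\infty^2 T$. A Cauchy--Schwarz split, combined with the Gaussian tails of $B$ and the exponential martingale estimate $\mathbb{E}\exp(c\,\mathrm{Re}\, J^\theta) \leq \exp(c^2 \|F\|_\infty^2 T/2)$ (using Jensen to handle the outer $\theta$-integral), closes the bound.

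For part (iii), a stochastic Fubini rewrites $J(s,t,z,w) = \int_s^t \widetilde F(s,r,z,w)\, dB_r$ with $\widetilde F(s,r,z,w) := \int_0^1 F(W^\theta(s,r,z,w))\, d\theta$. Assuming $s \leq u$ and $t \leq v$, split
\[ J(s,t,z,w) - J(u,v,\widetilde z,\widetilde w) = \int_s^u \widetilde F(s,r,z,w)\, dB_r + \int_u^t [\widetilde F(s,r,z,w) - \widetilde F(u,r,\widetilde z,\widetilde w)]\, dB_r - \int_t^v \widetilde F(u,r,\widetilde z,\widetilde w)\, dB_r. \]
BDG with $\|F\|_\infty < \infty$ controls the first and third pieces by $|s-u|^{p/2}$ and $|t-v|^{p/2}$, which already dominate the required $p\alpha/2$ powers. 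For the middle integral, BDG and Jensen in time reduce matters to bounding $\mathbb{E}|\widetilde F(s,r,z,w) - \widetilde F(u,r,\widetilde z,\widetilde w)|^p$; $\alpha$-H\"older continuity of $F$ together with the subadditivity $(a+b)^\alpha \leq a^\alpha + b^\alpha$ for $\alpha \in (0,1]$ bounds this by
\[ C\|F\|_\alpha^p \bigl( \mathbb{E}|\varphi(s,r,z) - \varphi(u,r,\widetilde z)|^{p\alpha} + \mathbb{E}|\varphi(s,r,w) - \varphi(u,r,\widetilde w)|^{p\alpha} \bigr), \]
and the hypothesis $p \geq 2/\alpha$ ensures $p\alpha \geq 2$, so part (ii) applies at exponent $p\alpha$ and closes the estimate. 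I expect the main obstacle to be the uniform exponential moment bound on $\exp V$ in part (ii): this is where the bounded-noise representation $V = 2(I-J)$ (rather than any control on $F'$) is doing the work, and it is the mechanism by which the argument survives even though $F'$ can blow up near $\mathbb{R}$.
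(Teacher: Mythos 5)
Your proposal is correct and follows essentially the same route as the paper: part (i) is the paper's It\^o--Tanaka argument (shift by $iy$, Lemma \ref{complex-ito-formula}, pass to the limit, then Lemma \ref{S=0}), and part (iii) coincides with the paper's proof; in part (ii) your three-term telescoping, with Cauchy--Schwarz against a uniform exponential moment bound for $\exp[V]$, replaces the paper's case split $t\in[s,u]$ versus $t\geq u$ and its conditioning on $\mathcal{F}_u$ --- a slightly cleaner organization built from the same ingredients (flow property, part (i), boundedness of $F$, Lipschitzness of $G$, and exponential martingale/Fernique bounds for $\mathrm{Re}\,J^{\theta}$). One phrase in (i) should be repaired: the displayed identity $V(s,t,z,w)=\int_s^t\int_0^1 F'(W^{\theta}(s,r,z,w))\,d\theta\,dr$ (and likewise the pointwise telescoping $\dot S_r=S_r\int_0^1F'(W^{\theta})\,d\theta$) is not justified, because $F'$ need not extend to $\mathbb{R}$ while the trajectories and the segments joining them may lie on $\mathbb{R}$; as in the paper, the $y\to0+$ limit should be taken at the level of the Riemann--Stieltjes relation $\int_s^t S_r\,dV^y_r\to\int_s^t S_r\,dV_r$, which yields $dS_r=S_r\,dV_r$ with $V=2(I-J)$ directly --- precisely the $F'$-free representation that your part (ii) then exploits. (In (iii) you, like the paper, only write out the configuration $s\leq u\leq t\leq v$; the remaining configurations are covered by the same easy BDG estimates on the non-overlapping pieces.)
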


\begin{proof}
\begin{enumerate}[label=(\roman*)]
\item[$(i)$] Let $S_t = \varphi(s,t,z) - \varphi(s,t,w)$. Then 
\[S_t = z-w + \int_s^t \{F(\varphi(s,r,z)) - F(\varphi(s,r,w))\}dr.\]
Similarly as in the proof of Proposition \ref{uniqueness}, it follows that almost surely,
\[\int_s^t \{F(\varphi(s,r,z)) - F(\varphi(s,r,w))\}dr = \int_s^tS_rdV(s,r,z,w)\]
Thus \[S_t = z-w + \int_s^tS_rdV(s,r,z,w)\]
and Lemma \ref{S=0} proves the claim.

\item[$(ii)$] We will use $\lesssim$ to denote inequalities upto multiplication by constants $C$ depending only on $p, F, T$ which may change from line to line. W.l.o.g. we can assume $s \leq u$. If $t \in [s, u]$, we use the bound   
  \begin{eqnarray*}
    \lefteqn{|\varphi(s,t,z) - \varphi(u,v,w)|}\\
    &=&\bigl| z - w + \int_s^t F(\varphi(s,r,z))dr - \int_{u}^{v}F(\varphi(u, r, w))dr + B_t - B_s - (B_{v} - B_{u}) \bigr| \\
&\lesssim& |z- w| + |t-s| + |v - u| + |B_t - B_{v}| + |B_{s} - B_{u}|.
\end{eqnarray*}
Since in this case $|t-s| \leq |s- u|$ and $|v - u| \leq |v - t|$, the claim follows using standard moment bounds on increments of Brownian motion.\\
When $t \geq u$, using the flow property, $\varphi(s,t,z) - \varphi(u, v, w) = \varphi(u, t, \varphi(s, u, z)) - \varphi(u, v, w)$. Since \[|\varphi(u, v, w) - \varphi(u, t, w)| \lesssim |t-v| + |B_{t} - B_{v}|,\] it is enough to get the desired moment estimate on $ |\varphi(u, t, \varphi(s, u, z)) - \varphi(u, t, w)|$. Using previous part $(i)$, 
\[ |\varphi(u, t, \varphi(s, u, z)) - \varphi(u, t, w)| = | \varphi(s, u, z)-w| \times |\exp[V(u,t,\varphi(s, u, z),w)]|.\]
Again, since $\varphi(s, u, z)$ is $\mathcal{F}_u$ measurable and
\[ | \varphi(s, u, z)-w| \lesssim |z-w| + |s-u| + |B_s-B_{u}|,\]
it suffices to show that 
\begin{equation}\label{exp-martingale-bound} 
\mathbb{E}\bigl\{|\exp[V(u,t,\varphi(s, u, z),w)]|^p\bigl|\mathcal{F}_u\bigr\} \lesssim 1.
\end{equation}
To this end, note that $V= 2(I-J)$ and 
\[ |I^\theta(u,t,\varphi(s, u, z),w)|   \lesssim |t-u| + |B_t -B_{u}|.\]

Fernique Theorem implies that if $q \lesssim 1$, then $\mathbb{E}\{\exp[q|B_t-B_u|]\} \lesssim 1$. Thus, using H\"older inequality, it suffices to prove that for $q\lesssim 1$, 
\[\mathbb{E}\bigl\{\exp[q|ReJ(u,t,\varphi(s, u, z),w)|]\bigl|\mathcal{F}_u\bigr\} \lesssim 1.\]

Note that $Re(J^\theta)$ is a martingale with $[Re(J^\theta)]_t \lesssim (t - u)$. By Dambis-Dubins-Schwarz martingale embedding theorem, $Re(J_t^\theta) = \tilde{B}_{[Re(J^\theta)]_t}$ for some another Brownian motion $\tilde{B}$. Using the Fernique Theorem again gives us the bound \eqref{exp-martingale-bound} which proves the claim.

\item[$(iii)$] 
W.l.o.g. we assume $s \leq u$. If $t \in [s,u]$, then using Fubini Theorem for stochastic integrals and Burkholder-Davis-Gundy inequality, 
\begin{align*} \mathbb{E}[|J(s,t, z, w)|^p] &= \mathbb{E}\biggl| \int_s^t\int_0^1 F(W^{\theta}(s,r,z,w))d\theta dB_r\biggr|^p \\ & \lesssim \mathbb{E}\biggl( \int_s^t\biggl\{\int_0^1 |F(W^{\theta}(s,r,z,w))|d\theta \biggr\}^2dr\biggr)^\frac{p}{2} \\
  & \lesssim |t-s|^{\frac{p}{2}}\\
& \lesssim |s-u|^{\frac{p}{2}}.
\end{align*}
Similarly, $\mathbb{E}[|J(u,v, \tilde{z}, \tilde{w})|^p]  \lesssim |t-v|^{\frac{p}{2}}$ which gives 
\[ \mathbb{E}[|J(s,t,z,w) - J(u,v, \tilde{z}, \tilde{w})|^p] \lesssim |s-u|^{\frac{p}{2}}+ |t-v|^{\frac{p}{2}}.\]
If $t \geq u$, then 
\[ J(s,t,z, w) = \int_s^{u} \int_0^1 F(W^{\theta}(s,r,z,w))d\theta dB_r + \int_{u}^t\int_0^1 F(W^{\theta}(s,r,z,w))d\theta dB_r,\]
and
\[ J(u,v,\tilde{z}, \tilde{w}) = \int_t^{v} \int_0^1 F(W^{\theta}(u,r,\tilde{z},\tilde{w}))d\theta dB_r + \int_{u}^{t} \int_0^1 F(W^{\theta}(u,r,\tilde{z},\tilde{w}))d\theta dB_r.\]
Thus using the previous step, 
\begin{align*}
\mathbb{E}[|J(s,t,z,w) - J(u,v, \tilde{z}, \tilde{w})|^p] & \lesssim |s-u|^{\frac{p}{2}}+ |t-v|^{\frac{p}{2}} + \Theta(s,t,z,w,u,v,\tilde{z},\tilde{w}),
\end{align*}
where $\Theta(s,t,z,w,u,v,\tilde{z},\tilde{w})$ is defined by
\[\Theta(s,t,z,w,u,v,\tilde{z},\tilde{w})= \mathbb{E}\biggl[ \biggl|\int_{u}^t\int_0^1 \{ F(W^{\theta}(s,r,z,w)) -  F(W^{\theta}(u,r,\tilde{z},\tilde{w}))\}d\theta dB_r\biggr|^p\biggr]. \]
Again using the Burkholder-Davis-Gundy inequality and previous part $(ii)$, 
\begin{align*}
\Theta(s,t,z,w,u,v,\tilde{z},\tilde{w}) &\lesssim \int_{u}^t\int_0^1 \mathbb{E}[|F(W^{\theta}(s,r,z,w)) -  F(W^{\theta}(u,r,\tilde{z},\tilde{w}))|^p]d\theta dr \\
& \lesssim \int_0^T\mathbb{E}[|\varphi(s,r,z) - \varphi(u,r,\tilde{z})|^{p\alpha }+ |\varphi(s,r,w)- \varphi(u,r, \tilde{w})|^{p\alpha}]dr \\
&  \lesssim | s- u|^{\frac{p\alpha}{2}} + |z- \tilde{z}|^{p\alpha} + |w-\tilde{w}|^{p\alpha},
\end{align*}
and putting together all the above different cases completes the proof.

\end{enumerate}
\end{proof}

\begin{proof}[Proof of Theorem \ref{main-thm}]
From the moment estimate obtained in Proposition \ref{exp-identity}-$(ii)$, it follows using Kolmogorov-Chentsov Theorem that the random field $\{\varphi(s,t,z)\}_{(s,t,z)\in \Delta\times \overline{\mathbb{H}}}$ has a continuous modification which is also almost surely $\eta$-H\"older on compact subsets of $\Delta\times \overline{\mathbb{H}}$ for all $\eta < \frac{1}{2}$. With an abuse of notation, we will use the same symbol $\varphi$ to denote its continuous version. Similarly, using Proposition \ref{exp-identity}-$(iii)$, the random field $\{J(s,t,z,w)\}_{(s,t,z,w)\in \Delta\times \overline{\mathbb{H}}\times \overline{\mathbb{H}}}$ also has a continuous modification which is almost surely $\eta$-H\"older on compact subsets of $\Delta\times \overline{\mathbb{H}}\times \overline{\mathbb{H}}$ for all $\eta < \frac{\alpha}{2}$. Again, we use the same letter $J$ to denote its continuous version. It follows that $V= 2(I-J)$ is almost surely continuous as well and Proposition \ref{exp-identity}-$(i)$ implies that almost surely for all $(s,t,z,w)\in \Delta\times \overline{\mathbb{H}}\times \overline{\mathbb{H}}$,
\[\varphi(s,t,z) - \varphi(s,t,w) = (z-w) \exp[V(s,t,z,w)].\]
This implies that $\varphi(s,t,.)$ is injective and continuously differentiable on $\overline{\mathbb{H}}$ with $\varphi'(s,t,z) = \exp[V(s,t,z,z)]$ completing the proof. 
\end{proof}

\section{An Informal Discussion.}\label{conclusion}
\subsection{Trace of Loewner chains.} \label{trace-problem} 

We first briefly recall some basics about chordal Loewner's theory in the upper half plane $\mathbb {H}$, see \cite{lawlerbook} for details.\\

Let $\gamma$ be a continuous injective curve 
from the compact time interval $[0,T]$ into $\mathbb{H}\cup \{0\}$ with $\g(0) =0$. Loewner's theory provides a way to encode the curve $\g$ via a real valued function $U$ which will be called the driving function or simply the driver of $\g$ which is defined as follows. Note that for each $t\geq 0$, $H_t := \mathbb{H}\setminus \g[0,t]$ is a simply connected domain and there exists a unique conformal map $g_t$ from the slit domain $H_t$ onto $\mathbb {H}$ satisfying the so called \textit{hydrodynamic normalization} given by $\lim_{z \to \infty} (g_t(z) - z  ) =  0$.  
The map $g_t$ is called the mapping-out function of the set $K_t := \gamma[0,t]$. Laurent series expansion of $g_t$ at infinity yields a non-negative constant $b_t$ depending on $K_t $ such that 
\[g_t(z) = z + \frac{b_t}{z} + O\biggl(\frac{1}{|z|^2}\biggr), \hspace{2mm} \mbox{as} \hspace{2mm} z \to \infty.\]

The constant $b_t$ is called the half-plane capacity of $K_t$ and denoted by $b_t = \hcap(K_t)$. It is easy to see that $t \mapsto \hcap (K_t)$ is continuously increasing. Thus it is possible to choose a parameterization of $\gamma$ so that $\hcap(K_t) = 2t$ for all $t \in [0,T]$. The mapping out function $g_t$ also admits a continuous extension to the boundary point $\gamma_t $ of the domain $H_t$. The driver $U$ is then defined by $U_t := g_t(\gamma_t)$ which can be easily shown to be a continuous real valued function.  The significance of the driver $U$ comes from the fact that it describes the evolution of the conformal maps $g_t(z)$ in variable $t$ via Loewner differential equation (LDE) given by  
\begin{equation}\label{LDE} 
\partial_t{g}_t(z) = \frac{2}{g_t(z)- U_t}, \hspace{2mm} g_0(z)= z. 
\end{equation} 

In fact one can also recover the curve $\gamma$ from $U$ as follows. For each $z\in \overline{\mathbb{H}}\setminus \{0\}$, let $[0,T_z)$ with $T_z \in (0, \infty]$ denote the maximal interval of existence of the unique solution to equation \eqref{LDE}. Also define $T_0 = 0$. Then 
\[ \gamma[0,t] = \{z \in \overline{\mathbb{H}} \st T(z) \leq t\}.\]

The procedure described above can also be naturally reversed. Given any continuous real valued function $U$ with $U_0=0$, define $g_t(z)$ for $z \in \overline{\mathbb{H}}\setminus \{0\}$ to be the solution of \eqref{LDE}. Let $T_z$ for $z\in \overline{\mathbb{H}}$ be similarly defined as above. Then 
\[ K_t := \{z \in \overline{\mathbb{H}} \st T(z) \leq t\}\]
defines an increasing family of compact sets in $\overline{\mathbb{H}}$. The family $K= \{K_t\}_{t \in [0,T]}$ is called the \textit{Loewner chain driven by} $U$. As in the previous case, $H_t :=\mathbb{H}\setminus K_t$ is simply connected and $g_t$ is the unique conformal map mapping $H_t$ to $\mathbb{H}$ satisfying hydrodynamic normalization. The Loewner chain $K$ also satisfies $\hcap(K_t)=2t$ and the so called \textit{conformal local growth property} meaning that the radius of $g_{t}(K_{t+s} \setminus K_t)$
 tends to $0$ as $s \to 0+$ uniformly with respect to $t$. However, in full generality, $K_t$ may not be locally connected and it cannot always be written as the image set $\gamma[0,t]$ for a continuous curve $\gamma$, e.g. logarithmic spirals, see \cite{LMR10}. Even if this is the case, the curve $\gamma$ may be non-simple and $K_t$ has to be described by filling the loops in the image $\gamma[0,t]$. We say that the Loewner chain $K$ driven by $U$ admits a trace or synonymously $U$ generates a trace if there exist a continuous curve $\g:[0,T]\to \overline{\mathbb{H}}$ such $\gamma_0=0$ and for all $t \in [0,T]$, $H_t$ is the unbounded component of $\mathbb{H}\setminus \gamma[0,t]$. We then call $\gamma$ is the trace of the Loewner chain $K$. \\

Denote $\hat{f}_t(z) = g_t^{-1}(z+ U_t)$. It can be easily shown that for each fixed $t\in [0,T]$, $\hat{f}_t(z)= h_t(z)$, where for $s\in [0,t]$, $h_s(z)$ is the solution to reverse time LDE given by 
\begin{equation}\label{reverse-LDE}
h_s(z) =z +  U_t - U_{t-s} + \int_0^s \frac{-2}{h_r(z)}dr.
\end{equation} 

Equation \eqref{reverse-LDE} is a special case of \eqref{model-eqn} driven by time reversal of $U$ with the vector field $F(z) = -2/z$ and above raised questions $(a)$-$(c)$ translates into the study of trace of Loewner chains. A natural question to ask is for which drivers $U$ does the associated Loewner chain $K$ admits a trace? It was proven in \cite{RM05},\cite{Lind} that if $U$ is $1/2$-H\"older with $||U||_{\frac{1}{2}}<4$, then $U$ generates a simple curve $\gamma$. In the random setting when $U_t = \sqrt{\kappa}B_t$, where $\kappa>0$ and $B$ is standard Brownian motion, the existence of trace $\gamma$ which are called Schramm-Loewner-Evolution (SLE$_{\kappa}$) was shown in \cite{BasicSLE} (for $\kappa\neq 8$) and \cite{SLE8} (for $\kappa =8$). Theorem \ref{main-thm} suggests that there is a clear distinction between existence of trace for $1/2$-H\"older drivers and Brownian drivers because Brownian drivers produce smoothing effects which is attributed to its high irregularity. Since H\"older condition on $U$ only measures its regularity by putting an upper bound on its modulus of continuity, $1/2$-H\"older drivers will typically not produce any smoothing effects. Heuristically speaking, the phenomena of existence of trace for Loewner chains is not just dependent on how regular the driver is but it is also dependent on its irregularity to some extent. In a recent work Catellier-Gubinelli \cite{CG} introduced the notion of $\rho$-irregularity of functions which captures such smoothing effects from a deterministic point of view. 
 
\subsection{A PDE approach to Theorem \ref{main-thm}.} \label{different-proof}

We briefly sketch a different approach based on the work of Flandoli-Gubinelli-Priola \cite{fgp} to prove Theorem \ref{main-thm}. The idea is to find an appropriate univalent map (i.e. injective and holomorphic) $\Phi: \mathbb{H} \to \mathbb{C}$ and consider the transformation $Y_t= \Phi(Z_t)$, where $Z$ solves \eqref{model-eqn} with $U=B$. By appropriately choosing $\Phi$, \eqref{model-eqn} can be transformed into a SDE for $Y$ whose coefficients have more regularity than that of $F$. The function $\Phi$ is chosen as the unique solution to the following second order ODE (which is actually a PDE in disguise because of the differentiation w.r.t. complex variable $z$)
\begin{equation}\label{PDE}
\frac{1}{2}\Phi''(z) + F(z) \Phi'(z) = \lambda (\Phi(z) -z), \hspace{2mm} \Phi(z) = z+ O(1) \hspace{2mm} \mbox{as} \hspace{2mm} z \to \infty, 
\end{equation} 
where constant $\lambda$ is appropriately chosen. In fact, $\Phi$ can be explicitly written as 
\begin{equation}
\Phi(z) = z+ \int_0^{\infty}\exp(-\lambda r)\mathbb{E}[F(\varphi(0,r,z))]dr.
\end{equation} 

An argument is required to verify that $\Phi$ is indeed an univalent function. Hadamard's global inverse function theorem was used in \cite{fgp} to verify the injectivity (for large enough $\lambda$) in their case. Testing univalency of holomorphic functions is more subtle and Hadamard's theorem will not apply in this case. However, there are various known criterion such as Becker's univalence criteria for such purposes, see \cite{garnett-marshall}.\\

One can easily check using It\^o formula that $Y_t = \Phi(Z_t)$ satisfies 
\begin{equation}\label{Y-eqn}
dY_t = \tilde{F}(Y_t)dt + \sigma(Y_t)dB_t , \hspace{2mm} Y_0= \Phi(z),
\end{equation} 
where $\tilde{F}(z) = \lambda(z - \Phi^{-1}(z))$ and $\sigma(z) = \Phi'(\Phi^{-1}(z))$. It is expected that function $\Phi$ has $C^{2+\alpha}$ regularity on $\mathbb{R}$, i.e. twice differentiable with $\alpha$-H\"older second derivative. Indeed, in \cite{fgp} where the authors deal with multidimensional Brownian motion, the second derivative term in \eqref{PDE} is replaced by the Laplacian and since $F$ is $\alpha$-H\"older, $2+ \alpha$ regularity of $\Phi$ is a consequence of Schauder's estimate from elliptic regularity theory. Schauder's estimate do not strictly apply in our case because of the $B$ is one dimensional and there is no Laplacian term. But, since $F$ is holomorphic on $\mathbb{H}$ and it is irregular only on $\mathbb{R}$, it is expected that $\Phi$ will indeed have $2+ \alpha$ regularity. Thus, functions $\tilde{F}, \sigma$ have $1+ \alpha$ regularity and classical results of Kunita \cite{kunita} on stochastic flows imply that equation \eqref{Y-eqn} has $C^{1+ \alpha}$ regular stochastic flow. Transforming \eqref{Y-eqn} back to \eqref{model-eqn} using $\Phi^{-1}$ gives a different proof of Theorem \ref{main-thm}.

\bibliographystyle{plain}

\end{document}